\theoremstyle{plain}
\newtheorem{theorem}{Theorem}[section]
\newtheorem{definition}[theorem]{Definition}
\newtheorem{proposition}[theorem]{Proposition}
\newtheorem{lemma}[theorem]{Lemma}
\newtheorem{remark}[theorem]{Remark}
\numberwithin{theorem}{section}
\numberwithin{equation}{section}
\newcommand{\average}{{\mathchoice {\kern1ex\vcenter{\hrule height.4pt
width 6pt depth0pt} \kern-9.7pt} {\kern1ex\vcenter{\hrule
height.4pt width 4.3pt depth0pt} \kern-7pt} {} {} }}
\def\R{\mathbb{R}}
\newcommand{\D }{\Delta }
\newcommand{\G }{\Gamma}
\renewcommand{\O }{\Omega }
\newcommand{\Om}{{\Omega}}
\newcommand{\be}{\begin{equation}}
\newcommand{\ee}{\end{equation}}
\renewcommand{\k}{\kappa}
\newcommand{\dist}{{\rm dist}}
\newcommand{\supp}{{\rm supp}}
\DeclareMathOperator{\id}{id}
\renewcommand{\epsilon}{\varepsilon}
\newcommand{\Ds}{ (-\D)^s}
\begin{document}
\title{Shape derivative approach to fractional overdetermined problems}

 \author[]
{Sidy Moctar  Djitte${}^{1}$, Ignace Aristide Minlend ${}^2$}


\address{${}^1$D\'epartement de Math\'ematiques, UFR Sciences Exactes et Naturelles, Moulin de la Housse, BP 1039, 51687 REIMS Cedex, France.}
\email{sidy-moctar.djitte@univ-reims.fr, sidy.m.djitte@aims-senegal.org}
\address{${}^2$ Faculty of Economics and Applied Management, University of Douala,  BP 2701, Douala, Littoral Province, Cameroon.}
\email{\small{ignace.a.minlend@aims-senegal.org} }

\begin{abstract}
\noindent 
We use shape derivative approach  to prove that balls are the only convex and  $C^{1,1}$ regular domains in which the fractional overdetermined problem
\begin{equation*}
\left\{\begin{aligned}
\Ds u&= \lambda_{s, p} u^{p-1}\quad\text{in}\quad\Om \\
u &= 0\quad \text{in}\quad\R^N\setminus \Om\\
u/\delta^s&=C_0\quad\text{on\;\; $\partial\O$}
\end{aligned}
\right. 
\end{equation*}
admits a nontrivial solution for $p\in [1, 2]$ and where  $\lambda_{s, p}= \lambda_{s, p}(\O)$ is the best constant in the family of Subcritical Sobolev inequalities. 
In the cases $p=1$ and $p=2$, 
we recover the classical symmetry results of Serrin, corresponding to the torsion problem and the first Dirichlet eigenvalue problem, respectively (see \cite{FS-15}). We note that for $p\in (1,2)$, the above problem lies outside the framework of \cite{FS-15}, and the methods developed therein do not apply.  
Our approach extends to the fractional setting a method initially developed by A. Henrot and T. Chatelain in \cite{CH-99}, and relies on the use of domain derivatives combined with the continuous Steiner symmetrization introduced by Brock in \cite{Brock-00}.
\end{abstract}
\maketitle

\indent{\it Keywords.} {Fractional laplacian, overdetermined problem, shape derivative,  Steiner symmetrization.}

\setcounter{equation}{0}
\section{Introduction}
Let $s\in (0,1)$ and  $\O$ be a bounded open set in $\R^N$ with $N\geq 1$.  The present paper is devoted to  a rigidity result  for the overdetermined problem 
\begin{equation}\label{ov-det12}
\left\{\begin{aligned}
\Ds u&= \lambda_{s, p} u^{p-1}\quad\text{in}\quad\Om \\
u &= 0\quad \text{in}\quad\R^N\setminus \Om\\
u/\delta^s&=C_0\quad\text{on\;\; $\partial\O$}
\end{aligned}
\right. 
\end{equation}
where $p\in [1,2]$, $\delta(\cdot):=\textrm{dist}(\cdot, \partial \O)$  and $(-\Delta)^s$ stands for the fractional laplacian of order $s$ defined for sufficiently regular functions, say, $\varphi  \in C^{1, 1}_c(\R^N)$ by the principal value integral
\[
(-\D)^s\varphi(x) = c_{N,s}p.v\int_{\R^N}\frac{\varphi(x)-\varphi(y)}{|x-y|^{N+2s}}dy. 
\]
with $c_{N,s}=\pi^{-N/2}s4^s\frac{\G(N/2+s)}{\G(1-s)}$.  The reason for the
 presence of the normalization constant $c_{N,s}$ is to match with another natural definition, based on the Fourier transform which sets the fractional Laplace operator $(-\Delta)^s$ as the Fourier multiplier of symbol $|\xi|^{2s}$. In  \eqref{ov-det12}, $u/\delta^s$  is defined on $\partial \O$ as the limit 
 \begin{equation}\label{fractnormaderi}
 \frac{u}{\delta^s}(x_0):=\lim_{x\rightarrow x_0, x\in \O} \frac{u}{\delta^s}(x)
 \end{equation}
provided it exists.  Weak solutions to the first two equations in \eqref{ov-det12} corresponds to functions for which the best constant in the inequality  
\begin{equation*}\label{Sob-best}
C\|u\|^2_{L^p(\O)}\leq  [u]_{H^s(\R^N)}^2, \quad \textrm{ for all}\quad  u\in \mathcal{H}^s_0(\Om),
\end{equation*}
is attained. That is, functions that realizes the minimum in 
\begin{equation}\label{eqlamsp}
\lambda_{s, p}(\O):=\inf\Bigl\{ [u]_{H^s(\R^N)}^2: u\in \mathcal{H}^s_0(\Om),\quad  \|u\|^2_{L^p(\O)}=1\Bigl\},
\end{equation} 
when $p$ is subcritical, i.e, $p\in [1, \frac{2N}{N-2s})$ if $2s<N$ and $p\in [1, \infty)$ if $2s\leq N=1$. 
Here $\mathcal{H}^s_0(\Om)$ is the completion of $C^{0,1}_c(\Om)$ with respect to the Gagliardo semi-norm defined by 
$$
[u]_{H^s(\R^N)}^2:= \frac{c_{N,s}}{2}\iint_{\R^N\times\R^N}\frac{(u(x)-u(y))^2}{|x-y|^{N+2s}}dxdy.
$$
That the infimum in \eqref{eqlamsp} is attained follows from standard arguments in the calculus of variations. For $p\in [1,2]$, it is well known that the solution to the first two equations --or the function that achieves the infimum in \eqref{eqlamsp}-- is unique (see \cite[Lemma A.1]{DFW-21}). Moreover, by standard regularity theory (see, e.g., \cite{Sylves, RS, PalaSavinVal}), the solution satisfies  $u\in C^s(\overline{\O})\cap C^\infty(\O)$. 
If, in addition,  $\O$ is of class $C^{1,1}$, then $u/\delta^s\in C^0(\overline{\O})$. Therefore, the overdetermined condition in \eqref{ov-det12} and the whole equation is well-defined in the  pointwise sense.
\\
In the seminal work \cite{Serrin}, Serrin proved, in the case $s=1$ and $p=1$, that if \eqref{ov-det12} admits a nontrivial solution, then $\O$ must be a round ball. Serrin's work initiated extensive research in the field of overdetermined problems, and numerous extensions and alternative approaches to his result have since been proposed see for instance \cite{BH-02, CH-99 , Wb, SS} and the references therein.\\\\
In \cite{FS-15}, Fall and Jarohs extends Serrin's result to the case $s\in (0,1)$, proving that if $f:\R\to\R$ is locally Lipschitz continuous and the equation \begin{equation}\label{f-ov-det}
\left\{\begin{aligned}
\Ds u&= f(u)\quad\text{in}\quad\Om \\
u &= 0\quad \text{in}\quad\R^N\setminus \Om\\
u&>0\quad\text{in $\O$}\\
u/\delta^s&=C_0\quad\text{on\;\; $\partial\O$}
\end{aligned}
\right. 
\end{equation}
has a nontrivial solution, and if moreover 
\begin{equation}\label{c1-extension}
u/\delta^s\quad \textrm{has a $C^1$ extension up to the boundary}\footnote{In \cite{FS-15}, the assumption \eqref{c1-extension} was missing. The necessity of this assumption for the result stated there was pointed out to us by Sven Jarohs; see, for example \cite[remark 3.1]{FSvenTadeuszSalani}.}   
\end{equation}
then $\O$ must be a ball. The assumption \eqref{c1-extension}, is satisfied if $\Omega$ is of class $C^{2,\gamma}$ (see e.g \cite{RS-Fn}). We stress that the assumption \eqref{c1-extension} is reasonable since even in the classical case $s=1$ considered by Serrin, the solution is required to be $C^2(\overline{\Omega})$. 

An early rigidity result for equation \eqref{f-ov-det} was first obtained in  \cite{DV} where the authors considered the particular case $f=1$, $N=2$ and $s=1/2$. The arguments in the aforementioned papers is based in the moving plane method and heavily relies on the Lipschtiz continuity of the nonlinearity $f$. In this work, we consider problem \eqref{f-ov-det} with a pure power nonlinearity $f(t) = \lambda t^{p-1}$ where $p\in (1,2)$. Note that this nonlinearity does not fall within the framework of \cite{FS-15}, as it is not locally Lipschitz continuous in $[0,\infty)$. Nevertheless, we show that if $\O$ is convex and of class $C^{1,1}$, the same symmetry result holds. Our approach do not involve the regularity assumptions in \cite{FS-15}  and builds on  a  method pioneered by A. Henrot and T. Chatelain, and is based on the use of shape derivatives. \\\\
Here and in the following, we call a $C^{1,1}$ bounded open set $\O$ a solution of \eqref{ov-det12} if the later admits a nontrivial point-wise solution $u$. Our result can be stated as follows.
\begin{theorem}\label{Theorem11}Let $p\in [1,2]$. Then, every convex solution $\Om$ of \eqref{ov-det12} must be a ball.
\end{theorem}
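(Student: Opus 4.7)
The plan is to adapt the Chatelain--Henrot shape-derivative strategy of \cite{CH-99} to the fractional setting, combining a Hadamard-type formula for $\Omega\mapsto\lambda_{s,p}(\Omega)$ with Brock's continuous Steiner symmetrization from \cite{Brock-00}.

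The first step is to establish a shape-derivative formula: for $V\in C^{\infty}_c(\mathbb{R}^N,\mathbb{R}^N)$ setting $\Omega_t=(\mathrm{id}+tV)(\Omega)$, one expects
$$
\frac{d}{dt}\Big|_{t=0}\lambda_{s,p}(\Omega_t)\;=\;-\kappa_{N,s}\int_{\partial\Omega}\left(\frac{u}{\delta^s}\right)^{\!2}V\cdot\nu\,d\sigma
$$
for an explicit positive constant $\kappa_{N,s}$. To derive this I would differentiate the variational characterization \eqref{eqlamsp} along the flow $\Omega_t$, exploiting the uniqueness and simplicity of the minimizer $u$ (valid for $p\in[1,2]$ by \cite[Lemma A.1]{DFW-21}), and convert the resulting volume expression into a boundary integral through a Pohozaev-type identity for the fractional Laplacian. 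The $C^{1,1}$ regularity of $\partial\Omega$ is precisely what guarantees that the trace $u/\delta^s\in C^0(\overline\Omega)$ is well-defined, so the right-hand side is meaningful.

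Inserting the overdetermined condition $u/\delta^s\equiv C_0$ reduces the above to $-\kappa_{N,s}C_0^{2}\int_{\partial\Omega}V\cdot\nu\,d\sigma$, which vanishes for every volume-preserving deformation. Thus $\Omega$ is a critical point of $\lambda_{s,p}$ under the volume constraint. To conclude, fix an arbitrary $e\in S^{N-1}$ and let $\{\Omega_\tau\}_{\tau\geq 0}$ be Brock's continuous Steiner symmetrization of $\Omega$ in the direction $e$. Each $\Omega_\tau$ is convex with $|\Omega_\tau|=|\Omega|$, and the rearrangement inequality for the fractional Gagliardo seminorm implies that $\tau\mapsto\lambda_{s,p}(\Omega_\tau)$ is non-increasing. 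Computing its right derivative at $\tau=0^{+}$ in two ways --- once from the Hadamard formula applied to the velocity field $V_S$ of the symmetrization (which vanishes by the previous step, since $V_S$ is volume-preserving) and once from the rearrangement side (which is non-positive and vanishes only when $\Omega$ is already Steiner-symmetric with respect to $\{e\}^{\perp}$) --- forces $\Omega$ to be symmetric in every direction, hence a ball.

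The main obstacles are twofold. First, making the Hadamard formula rigorous under only $C^{1,1}$ regularity: one must control the dependence of the shifted minimizer $u_t$ on $\Omega_t$ and pass to the limit in the boundary terms via a suitable form of the fractional Pohozaev identity. The uniqueness of the minimizer for $p\in[1,2]$ is what allows clean differentiation and is the very reason the argument stops at $p=2$. Second, and more delicate, is establishing the rigidity version of Brock's continuous symmetrization in the fractional setting --- namely that vanishing of the initial derivative of $\lambda_{s,p}$ along the flow forces $\Omega$ itself, not merely the minimizer, to be Steiner-symmetric. This typically rests on a strict rearrangement inequality for the Gagliardo seminorm, whose equality case must be analyzed with care in the fractional subcritical regime.
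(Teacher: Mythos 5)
Your high-level plan --- couple the fractional Hadamard formula for $\lambda_{s,p}$ from \cite{DFW-21} (so that a solution domain is a volume-constrained critical point of $\lambda_{s,p}$) with Brock's continuous Steiner symmetrization to reach a contradiction --- is exactly the route the paper takes. But your proposal has a genuine gap at the rigidity step that is supposed to close the argument.

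You compare the right derivative $\frac{d}{d\tau}\lambda_{s,p}(\Omega^\tau)\big|_{\tau=0^+}$ computed from the Hadamard formula (which is $0$, by criticality) with the one from the rearrangement side, and you assert the latter ``is non-positive and vanishes only when $\Omega$ is already Steiner-symmetric''. That is precisely what needs to be proved: knowing only that $\tau\mapsto [u^\tau]^2_{H^s(\R^N)}$ is non-increasing yields a one-sided derivative $\le 0$, not $<0$, and the equality-case analysis of the continuous rearrangement inequality for the Gagliardo seminorm is the delicate heart of the matter --- you flag it as the ``main obstacle'' but do not resolve it. The paper avoids the equality case altogether by invoking the quantitative strict-decrease theorem of Delgadino--Vaughan (Theorem~\ref{thm-del}): if $u$ is not radially decreasing about any center --- automatic here, since otherwise $\Omega=\{u>0\}$ would already be a ball --- there exist $\gamma_0>0$, $t_0>0$ and a hyperplane $H$ with $[u^t]^2_{H^s}\le [u]^2_{H^s}-\gamma_0 t$ for $0\le t\le t_0$. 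Plugging $u^t\in\mathcal H^s_0(\Omega^t)$ into the variational characterization of $\lambda_{s,p}(\Omega^t)$ then gives directly $d\lambda_{s,p}(\Omega)\cdot V \le -\gamma_0 <0$, a clean contradiction with criticality, with no equality analysis required.

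A second ingredient missing from your outline: the Hadamard formula is available only for perturbations along Lipschitz vector fields, so to apply it you must realize $\Omega^t$ as $\Phi_t(\Omega)$ for a flow $\Phi_t=\mathrm{id}+(1-e^{-t})V$ and verify $V\in C^{0,1}(\R^N,\R^N)$. This is exactly where the convexity hypothesis enters: convexity makes the sections of $\Omega$ intervals $(y_1(x'),y_2(x'))$ with Lipschitz endpoint functions, so that $V(x',y)=\bigl(0,-\tfrac12(y_1+y_2)(x')\bigr)$ is globally Lipschitz and $\Omega^t=\Phi_t(\Omega)$. For a non-convex $\Omega$ the continuous symmetrization is generally not the image of $\Omega$ under any flow, and the shape-derivative machinery does not apply; your proposal never addresses this, yet it is the precise point at which convexity is used.
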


This result is closely related to the work of Fall and Jarohs \cite{FS-15}, who treated the case $p\in \{1\}\cup  [2, \infty)$ without assuming convexity, but under a slightly stronger regularity condition that $\O$ is of class $C^2$. By contrast, our result requires only $C^{1,1}$ regularity, though it crucially depends on the convexity of $\O$. We also note that our method uses the differentiability of the shape functional $\O\mapsto J_{s,p}(\O)$, defined in \eqref{eqenerJ} and this requirement restrict our analysis to the range $p\in [1,2]$. 

\par\;

The method presented here may not address the most general case, but it is of independent interest as it offers an alternative to the moving plane method. Unlike the moving plane approach, which does not generalize well when, for instance, the nonlinearity in \eqref{ov-det12} is not Lipschitz continuous or when the underlying operator is nonlinear, the method presented here remains applicable (see, e.g., \cite{BH-02}). 

 \par\;
 
The paper is organized as follows: Section \ref{sec-2} contains some preliminaries results on continuous Steiner symmetrization.  In section \ref{sec-3}, we relate solution domains $\O$  to  the overdetermined problem  \eqref{ov-det12} with critical sets of some specific domain dependent functional and in Section \ref{sec-4}, we give the proof of Theorem \ref{Theorem11}.

\section{Preliminaries on continuous  Steiner symmetrization}\label{sec-2}

In this section, we recall the definition of continuous Steiner symmetrization of sets and continuous functions in $\R^N$, and state some basic properties that will be used later in this note. Several definitions of continuous Steiner symmetrization can be found in the literature, differing primarily in the speed at which intervals are translated. Here, we adopt the definition introduced in \cite{Brock-00}, to which we refer for detailed proofs. For alternative formulations, the interested reader is referred to the relevant literature.

\begin{definition}\label{defn:cont set}Let $\mathcal{M}(\R)$ be the set of Lebesgue measurable sets in $\R$. We call a continuous Steiner symmetrization, a family of set transformations 
\[
E_t: \mathcal{M}(\R)\to \mathcal{M}(\R), \quad 0\leq t\leq +\infty,
\]
satisfying the properties $(M, N\subset \mathcal{M}(\R), 0\leq s,\,t\leq +\infty)$:
\begin{enumerate}
\item $E_t(M)= |M|$, \qquad\textrm{(equimeasurability)}
\item If $M\subset N$, then $E_t(M)\subset E_t(N)$,\qquad\textrm{(monotonicity)}
\item $E_t(E_s(M)) = E_{t+s}(M)$, \qquad(\textrm{semigroup properties)}
\item If $M = [a,b]$ is a bounded closed interval, then $E_t(M) = [a^t,b^t]$ where
\begin{align}\label{a-t}
a^t &= (1/2)\Big(a-b+e^{-t}(a+b)\Big),\\
b^t &= (1/2)\Big(b-a+e^{-t}(b+a)\Big).\label{b-t}
\end{align} 

\end{enumerate}
\end{definition}
We note that the existence of such transformation is guaranteed by \cite[Theorem 2.1]{Brock-00}.
Next we define the continuous Steiner symmetrization of open set in $\R^N$, with $N\geq 2$. Let $\Om\subset\R^N$ be an open set in $\R^N$. We denote by $\O'$ the projection of $\O$ onto the hyperplane $\{x_N = 0\}$. That is,
\[
\O' = \Big\{ x'\in \R^{N-1}: \exists\;\; y\in \R\quad \textrm{and}\quad (x',y)\in \O)\Big\}.
\]
For a fixed $x'\in \O'$, we define the vertical section of $\O$ at $x'$
  by
\[
\O(x'):=\Big\{ y\in \R: \; (x',y)\in \O\Big\}.
\]
\begin{definition}
Let $t\in [0,\infty]$ and $\O\subset  \R^N$ be an open set. The continuous Steiner symmetrization of $\O$ with respect to the hyperplane $\{x_N = 0\}$ denoted by $\O^t$ is defined by
\begin{equation}\label{Om-t}
 \O^t = \Big\{ (x',y): \; x'\in \Om'\;\; \textrm{and}\;\; y\in (\O(x'))^t\Big\}.
\end{equation}
\end{definition}
By \cite[Lemma 2.1]{Brock-00}, if $\O$ is open, then the symmetrization $\O^t$ has an open representative which we still denote by $\O^t$. From now, if one speak of continuous Steiner symmetrization of open set, we always mean its open representative.  Finally, if $u\in C^0(\R^N)\cap L^1(\R^N)$ is a continuous, nonnegative function, we define its superlevel sets for  $h>0$ by
\[
\O(h) := \Big\{ x\in \R^N: u(x)>h\Big\}.
\]
\begin{definition}\label{Steiner-sym-u}
For any $t\in [0,+\infty]$, the continuous Steiner symmetrization of $u$ with respect to the hyperplane $\R^{N-1}$ denoted by $u^t: \R^N\to \R$ is defined by
\begin{equation}\label{def-C-St-sym}
u^t(x) = \int_{0}^\infty \chi_{\O((h))^t}(x)dh.
\end{equation}   
\end{definition}
As a consequence of the definition, it follows from the layer-cake representation that the continuous Steiner symmetrization preserves the $L^p$ norm for 
all $p\in[1,\infty]$. Moreover the following properties hold true (see \cite[Lemma $3$ and Theorem $7$]{Brock-95})
\begin{lemma}\label{non-exp}
Let $ p\in [1, \infty]$ and $u, v\in L^p(\R^N)$ be two nonnegative continuous functions. Then for all $t\in [0,\infty]$, there holds:
\[
\big\|u^t-v^t\big\|_{L^p(\R^N)}\leq \big\|u-v\big\|_{L^p(\R^N)}.
\]
Moreover, if $u\in C^{0,1}(\R^N)$, then $u^t\in C^{0,1}(\R^N)$ with the same Lipchitz constant. 
\end{lemma}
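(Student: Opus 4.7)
The plan is to reduce both claims to one-dimensional analogues via Fubini, and then analyze the 1D continuous symmetrization through the layer-cake representation \eqref{def-C-St-sym} and the explicit endpoint formulas \eqref{a-t}--\eqref{b-t}.

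For the $L^p$ non-expansion, the first step is the fiberwise reduction: by combining \eqref{Om-t} with Definition \ref{Steiner-sym-u}, for each $x'\in\R^{N-1}$ the function $y\mapsto u^t(x',y)$ is exactly the 1D continuous Steiner symmetrization of $y\mapsto u(x',y)$, so by Fubini,
\[
\|u^t-v^t\|_{L^p(\R^N)}^p = \int_{\R^{N-1}} \|u^t(x',\cdot)-v^t(x',\cdot)\|_{L^p(\R)}^p\,dx',
\]
and the inequality reduces to its 1D version. In one dimension, the key tool is the set-theoretic $L^1$ contraction
\[
\|\chi_{E_t(A)}-\chi_{E_t(B)}\|_{L^1(\R)} \le \|\chi_A-\chi_B\|_{L^1(\R)},
\]
which I would prove first for $A,B$ finite disjoint unions of closed intervals, by direct computation from \eqref{a-t}--\eqref{b-t}, and then extend to general measurable sets by approximation together with the monotonicity property (2) of Definition \ref{defn:cont set}. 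The passage to the function-level $L^p$ contraction uses the layer-cake decomposition \eqref{def-C-St-sym}: the boundary cases are direct (for $p=\infty$ one uses monotonicity of $u\mapsto u^t$ together with the additivity $(v+c)^t=v^t+c$ applied to $u\le v+\|u-v\|_\infty$; for $p=1$ one uses the identity $|u-v|(y)=\int_0^\infty\chi_{\{u>h\}\triangle\{v>h\}}(y)\,dh$ and Fubini), while the intermediate range $p\in(1,\infty)$ is handled by a similar rearrangement-style argument applied to nested superlevel sets of the two functions.

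For the Lipschitz preservation, I would combine the $L^\infty$ case of Part 1 with the commutation of $E_t$ with transverse translations: since $(u(\cdot+h'))^t = u^t(\cdot+h')$ for every $h'\in\R^{N-1}\times\{0\}$, one obtains the bound $|u^t(x+h')-u^t(x)|\le L|h'|$ pointwise in every transverse direction. In the vertical direction, a fiberwise argument invokes the 1D Lipschitz preservation of $E_t$, which is a direct consequence of the length-preservation of intervals in the flow. The upgrade from separate bounds in the transverse and vertical directions to an $L$-Lipschitz estimate in every direction of $\R^N$ with the same constant $L$ follows from a Polya-Szego type inequality $\|\nabla u^t\|_\infty\le\|\nabla u\|_\infty$ for the continuous Steiner symmetrization, which in turn is again a consequence of the 1D analysis applied to difference quotients.

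The main technical obstacle is the 1D $L^1$ contraction for characteristic functions of finite unions of intervals. This requires a careful case analysis of how intervals translate, shrink their mutual distances, and may merge under the flow \eqref{a-t}--\eqref{b-t}; the nontrivial cancellations produced in the merging phase are precisely what make the inequality tight.
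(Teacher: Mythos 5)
The paper does not give its own proof of Lemma~\ref{non-exp}: it is cited verbatim from Brock \cite[Lemma~3 and Theorem~7]{Brock-95}. So your proposal cannot be compared against a proof in this text; I will instead assess it on its own merits.

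Your overall architecture (Fubini reduction to the 1D flow, then layer--cake on superlevel sets, then a set--theoretic contraction) is sound for the $L^p$ statement, but two of your ingredients are not as stated, and one later step is circular.

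\textbf{The set--theoretic $L^1$ contraction does not need interval case analysis.} You call the contraction $|E_t(A)\triangle E_t(B)|\le|A\triangle B|$ "the main technical obstacle" and propose a case analysis of translating/merging intervals followed by approximation. That is unnecessary: for general measurable $A,B$, properties (1) and (2) of Definition~\ref{defn:cont set} already give
\[
|E_t(A)\cap E_t(B)|\ \ge\ |E_t(A\cap B)|\ =\ |A\cap B|,
\]
(since $A\cap B\subset A$ and $A\cap B\subset B$), whence
\[
|E_t(A)\triangle E_t(B)| = |A|+|B|-2|E_t(A)\cap E_t(B)| \le |A|+|B|-2|A\cap B| = |A\triangle B|.
\]
Your detour through finite unions of closed intervals buys nothing and adds the very merging analysis you flag as delicate.

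\textbf{The passage to $1<p<\infty$ is the genuinely nontrivial step and you do not carry it out.} The phrase ``a similar rearrangement-style argument applied to nested superlevel sets'' hides the actual mechanism, which is the Crowe--Zweibel--Rosenbloom identity
\[
|a-b|^p = p(p-1)\int_0^\infty\!\!\int_0^\infty (t-s)_+^{p-2}\,\bigl|\chi_{\{a>s\}}-\chi_{\{b>s\}}\bigr|\,\bigl|\chi_{\{a>t\}}-\chi_{\{b>t\}}\bigr|\,ds\,dt
\]
for $a,b\ge 0$, followed by Fubini and the four-set inequality
\[
\bigl|(E_t(A)\triangle E_t(C))\cap(E_t(B)\triangle E_t(D))\bigr|\ \le\ \bigl|(A\triangle C)\cap(B\triangle D)\bigr|
\]
for nested pairs $B\subset A$, $D\subset C$ (proved again by monotonicity and equimeasurability after reducing to $|(A\triangle C)\cap(B\triangle D)| = |B\setminus C|+|D\setminus A|$). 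Without writing this down, the $L^p$ claim for $p\in(1,\infty)$ is not established; $L^1$ and $L^\infty$ non-expansion for a nonlinear map do not interpolate.

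\textbf{A small fix for $p=\infty$.} The additivity $(v+c)^t=v^t+c$ is not quite legitimate since $v+c\notin L^1(\R^N)$, so $(v+c)^t$ is outside the class for which \eqref{def-C-St-sym} is defined. The correct route is direct: with $c=\|u-v\|_\infty$, one has $\{u>h\}\subset\{v>h-c\}$, hence $\{u>h\}^t\subset\{v>h-c\}^t$ by monotonicity, and the layer--cake formula yields $u^t\le v^t+c$; symmetrize in $u,v$. Same idea as yours, but it avoids applying the symmetrization to a function outside the admissible class.

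\textbf{The Lipschitz preservation step is circular.} Your transverse-translation commutation plus $L^\infty$ non-expansion gives $|u^t(x+h')-u^t(x)|\le L|h'|$ for $h'\in\R^{N-1}\times\{0\}$, and the 1D Lipschitz preservation (granted) gives the same in the $e_N$ direction. But combining these only gives $\sqrt 2 L$ for a general direction, and you propose to close the gap via ``a Polya--Szego type inequality $\|\nabla u^t\|_\infty\le\|\nabla u\|_\infty$.'' That inequality \emph{is} the statement to be proved (Lipschitz preservation with the same constant), so invoking it is circular. A non-circular route is either the polarization-approximation argument (polarizations trivially preserve the Lipschitz constant, and the continuous Steiner flow is an $L^p$-limit of finite compositions of polarizations), or the geometric formulation: $u$ is $L$-Lipschitz iff $\{u>h+\varepsilon\}+B(0,\varepsilon/L)\subset\{u>h\}$ for all $h,\varepsilon>0$, and one shows that the flow $E_t$ preserves this ``outer parallel set'' inclusion. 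Neither is a consequence of the fiberwise bound alone, and without one of them your Lipschitz claim has a genuine gap.

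Finally, your characterization of the 1D Lipschitz preservation as ``a direct consequence of the length-preservation of intervals in the flow'' is too quick even in 1D: the nontrivial configuration is precisely the merging of two intervals, where the Lipschitz bound improves rather than being conserved verbatim, and this requires an argument on the distribution function, not on individual intervals.
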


The following result state that if $f:\R^N\to\R$ is not radially decreasing about any center, then the function $t\mapsto [f^t]^2_{H^s(\R^N)}$ is a strictly decreasing provided that $t\geq 0$ is small enough.
\begin{theorem}\cite[Theorem 1]{Delgadino}\label{thm-del}
Let $u\in L^1(\R^N)\cap C^0(\R^N)$ and $u^t$ be the continuous Steiner symmetrization of $u$ according the definition \eqref{Steiner-sym-u}. Assume $u$ is not radially decreasing about any center. Then there are constants $\gamma_0
 =\gamma_0(N,s,u) > 0$,
$t_0 = t_0(u)>0$, and a hyperplane $H$ such that
 \begin{equation}\label{main-prop-0}[u^t]^2_{H^s(\R^N)}\leq [u]^2_{H^s(\R^N)}-\gamma_0 t\quad \textrm{for all \;\;$0\leq t\leq t_0$}
\end{equation}
where
\[
[u]^2_{H^s(\R^N)}:=\frac{c_{N,s}}{2}\iint_{\R^N\times\R^N}\frac{(u(x)-u(y))^2}{|x-y|^{N+2s}}dxdy.
\]
and $u^t$ is the continuous Steiner symmetrization of $u$ with respect to the hyperplane $H$. Moreover, the hyperplane $H$ may be taken to be $\{x_N = 0\}$.
\end{theorem}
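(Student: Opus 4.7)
My plan is to reduce the statement to showing that $f(t) := [u^t]^2_{H^s(\R^N)}$ has a strictly negative one-sided derivative at $t = 0$. Combined with monotonicity of $f$ (a fractional Polya--Szego inequality for Brock's continuous symmetrization), a bound of the form $f'(0^+) \leq -\gamma_1 < 0$ immediately yields the linear decay $f(t) \leq f(0) - \tfrac{\gamma_1}{2}t$ on some $[0, t_0]$, so setting $\gamma_0 = \gamma_1/2$ completes the proof.

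To single out the hyperplane $H$, I would first observe that the hypothesis that $u$ is not radially decreasing about any center guarantees the existence of at least one direction in which $u$ fails to be symmetric decreasing: otherwise, symmetry along $N$ independent directions through a common center would force $u$ to be radially decreasing about that center. After an affine change of coordinates I take $H = \{x_N = 0\}$. To compute $f'(0^+)$ I combine the layer-cake identity \eqref{def-C-St-sym} with the identity
\begin{equation*}
[u]^2_{H^s(\R^N)} \;=\; 2 c_{N,s} \int_0^\infty\!\!\int_0^{h_2}\!\!\iint_{\Omega(h_2)\times \Omega(h_1)^c}\frac{dx\, dy}{|x-y|^{N+2s}}\, dh_1\, dh_2,
\end{equation*}
obtained by expanding $(u(x)-u(y))^2$ with two copies of the layer-cake formula. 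Under Brock's flow, \eqref{a-t}--\eqref{b-t} show that on each vertical line $\{x'\} \times \R$ every connected component $[a,b]$ of $\{y : u(x',y) > h\}$ translates rigidly toward $H$ at initial speed $-(a+b)/2$, producing an explicit normal velocity field on $\partial \Omega(h)$. Differentiating each double interaction integral at $t = 0$ through the classical shape-derivative formula for Riesz-type kernels yields a well-defined $f'(0^+) \leq 0$. The non-symmetry chosen in the previous step implies that a positive-measure set of levels $h$ produce slices that are not symmetric about $H$, and at each such level the integrand in $f'(0^+)$ is strictly negative; integrating these contributions gives $f'(0^+) \leq -\gamma_1$ for some $\gamma_1 = \gamma_1(N,s,u) > 0$.

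The principal obstacle I anticipate is the rigorous shape-derivative step: differentiating the singular double integral through a merely Lipschitz-in-time deformation of the superlevel sets, and converting the non-symmetry of $u$ into a quantitative lower bound for $|f'(0^+)|$. I would circumvent the low-regularity issue by first regularizing $u$ via mollification so that the superlevel sets are smooth and the shape-derivative calculus applies literally; the nonexpansivity statement Lemma \ref{non-exp} then allows me to pass to the limit with quantitative control. The strictness at non-symmetric slices is supplied by the strict Riesz rearrangement inequality applied on each vertical line, and the chosen non-symmetry of $u$ forces this strict inequality to hold on a set of levels with positive measure, which is the input needed for a strictly positive $\gamma_1$.
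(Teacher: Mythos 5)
A framing point first: the paper does not prove this theorem. It is stated as a direct citation of \cite[Theorem 1]{Delgadino}, and the remark that follows merely records that the argument there transfers to Brock's formulation of continuous Steiner symmetrization. So there is no in-paper proof to compare against; what follows assesses your attempt on its own.

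Your overall scheme---set $f(t)=[u^t]^2_{H^s(\R^N)}$, show $f'(0^+)\le -\gamma_1<0$, and read off the linear decay directly from the definition of the one-sided derivative---is the natural one, and the layer-cake identity you write down is correct. But two steps do not hold up. The mollification step is the more serious one. You propose to regularize $u$ so that the shape-derivative calculus applies literally, then use the $L^p$-nonexpansivity of Lemma~\ref{non-exp} to ``pass to the limit with quantitative control.'' That lemma controls $\|u^t_\epsilon-u^t\|_{L^p}$, not the Gagliardo seminorm and certainly not its $t$-derivative at $0$. What you actually need is a lower bound $\gamma_\epsilon\ge\gamma>0$ on the decrease rate, uniform in the mollification parameter, together with a uniform time horizon $t_{0,\epsilon}\ge t_0>0$; only then would weak lower semicontinuity give $[u^t]^2\le\liminf_\epsilon[u^t_\epsilon]^2\le[u]^2-\gamma t$. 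Nothing you say secures such uniformity, and it is not automatic: if the asymmetry of $u$ lives at fine scales, mollification may wash it out and force $\gamma_\epsilon\to 0$. This is exactly where the quantitative content of the theorem lives, and your proposal does not touch it.

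The second gap is the passage from non-symmetry to a strictly negative derivative. You appeal to the strict Riesz rearrangement inequality on each vertical line to conclude that ``at each such level the integrand in $f'(0^+)$ is strictly negative.'' Strict Riesz gives $[u^t]^2<[u]^2$ for every fixed $t>0$, a qualitative statement that does not by itself exclude $f'(0^+)=0$ (compare $t\mapsto -t^2$). Moreover the pointwise claim is not correct as stated: under Brock's flow each connected component of a vertical slice translates rigidly at speed $-(a+b)/2$, so the $t$-derivative of the interaction between two slices vanishes whenever their midpoints coincide, regardless of any asymmetry of the slices individually. What actually makes $f'(0^+)<0$ is that failure of Steiner symmetry about $H$ forces a positive-measure set of pairs of slices with \emph{mismatched} midpoints, and quantifying this requires working with the explicit derivative formula of the flow rather than invoking the Riesz rearrangement inequality.
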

\begin{remark}
Theorem \ref{thm-del} was originally proved in \cite{Delgadino} using a slightly different definition of continuous Steiner symmetrization. However, it is not difficult to verify that the same argument remains valid under the definition of continuous Steiner symmetrization adopted in the present work.
\end{remark}
As a corollary of Theorem \ref{thm-del}, we prove:
\begin{proposition}\label{prop-1} 

Let \( \Omega \subset \mathbb{R}^N \) be a bounded open set, and let 
$
u \in \mathcal{H}^s_0(\Omega) \cap C^0(\mathbb{R}^N)
$
be a nonnegative, continuous function. For each \( t \in [0, \infty] \), let \( \Omega^t \) be the continuous Steiner symmetrization of $\O$ defined as in equation~\eqref{Om-t}. Then for all \( t \in [0,\infty] \), the symmetrized function \( u^t \) belongs to \( \mathcal{H}^s_0(\Omega^t) \) and 
\begin{equation}\label{main-prop-0}
\|u^t\|^2_{\mathcal{H}^s_0(\O^t)}\leq \| u\|
^2_{\mathcal{H}^s_0(\O)}.
\end{equation}
If moreover $u$ is not radially decreasing about any point in \( \mathbb{R}^N \), then there exist constants 
$
\gamma_0 = \gamma_0(N, s, u) > 0$, $t_0 = t_0(u) > 0$ such that for all \( 0 < t \leq t_0 \), there holds
 \begin{equation}\label{main-prop-1}
\|u^t\|^2_{\mathcal{H}^s_0(\O^t)}\leq \| u\|
^2_{\mathcal{H}^s_0(\O)}-\gamma_0 t.
\end{equation}
\end{proposition}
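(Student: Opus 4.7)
The plan is to obtain Proposition~\ref{prop-1} as a fairly direct corollary of Theorem~\ref{thm-del}, with the only real work being to (a) confirm that $u^t$ is supported in $\O^t$ so that it is a candidate for membership in $\mathcal{H}^s_0(\O^t)$, and (b) upgrade the Gagliardo-seminorm inequality of Theorem~\ref{thm-del} to the $\mathcal{H}^s_0$-norm inequality by a density argument.

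First I would establish the support property. Since $u \in \mathcal{H}^s_0(\O) \cap C^0(\R^N)$ is nonnegative and continuous, $u \equiv 0$ on $\R^N \setminus \O$, so each strict superlevel set $\O(h) = \{u > h\}$ is an open subset of $\O$ for every $h > 0$. Applying the monotonicity property (ii) of Definition~\ref{defn:cont set} slice by slice gives $\O(h)^t \subset \O^t$ for every $h > 0$, and the layer-cake representation~\eqref{def-C-St-sym} then yields $\{u^t > 0\} \subset \O^t$. Next, to establish~\eqref{main-prop-0} and $u^t \in \mathcal{H}^s_0(\O^t)$, I would approximate $u$ in $\mathcal{H}^s_0(\O)$ by a sequence $u_n \in C^{0,1}_c(\O)$. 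By Lemma~\ref{non-exp}, $u_n^t \to u^t$ in $L^2(\R^N)$, each $u_n^t$ is Lipschitz with the same constant as $u_n$, and $\supp u_n^t \subset \overline{(\supp u_n)^t}$. Iterating Theorem~\ref{thm-del} via the semigroup property (iii), and using that radially decreasing functions are fixed by the symmetrization, yields the non-strict inequality $[u_n^t]_{H^s(\R^N)} \leq [u_n]_{H^s(\R^N)}$ for every $t \geq 0$; passing to the limit in $n$ using lower semicontinuity of the Gagliardo seminorm gives~\eqref{main-prop-0} together with $u^t \in \mathcal{H}^s_0(\O^t)$. The strict bound~\eqref{main-prop-1} then follows by applying Theorem~\ref{thm-del} directly to $u$, which by hypothesis is not radially decreasing about any center; its last sentence lets us orient coordinates so that the distinguished hyperplane is $\{x_N = 0\}$.

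The main technical obstacle is arranging that the approximants $u_n^t$ genuinely lie in $C^{0,1}_c(\O^t)$ rather than merely in $H^s(\R^N)$ with support in $\overline{\O^t}$, since the continuous Steiner symmetrization of a set compactly contained in $\O$ need not be compactly contained in $\O^t$. The natural remedy is to first approximate $u$ by functions supported in a slightly shrunk open subset $\O_\varepsilon \Subset \O$ and then symmetrize; by monotonicity $\overline{(\supp u_n)^t} \subset \overline{\O_\varepsilon^t}$, which is a compact subset of $\O^t$, so $u_n^t \in C^{0,1}_c(\O^t)$. Combined with Lemma~\ref{non-exp} and the non-strict seminorm inequality, this produces a Cauchy sequence in $\mathcal{H}^s_0(\O^t)$ whose limit is $u^t$, completing the argument.
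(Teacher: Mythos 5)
Your overall structure (support property, density argument for membership in $\mathcal{H}^s_0(\O^t)$, and Theorem~\ref{thm-del} for the strict estimate) mirrors the paper's, but your derivation of the non-strict seminorm inequality $[u^t]^2_{H^s(\R^N)}\le[u]^2_{H^s(\R^N)}$ has a genuine gap. You propose to obtain it by ``iterating Theorem~\ref{thm-del} via the semigroup property,'' but Theorem~\ref{thm-del} only applies when the function is \emph{not radially decreasing about any center}, it only gives the conclusion for $0\le t\le t_0(u)$ with $t_0$ depending on the function, and --- crucially --- the distinguished hyperplane $H$ it produces depends on $u$. During an iteration $t\mapsto u^t$ with respect to the fixed hyperplane $\{x_N=0\}$, the symmetrized function $u^\tau$ may at some point become symmetric decreasing with respect to $\{x_N=0\}$ while \emph{not} being radially decreasing about any center, at which stage Theorem~\ref{thm-del} neither applies with $H=\{x_N=0\}$ nor tells you the flow is stationary. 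Moreover, to push an iteration past any finite $t$ you would need continuity of $\tau\mapsto[u^\tau]^2_{H^s}$ along the flow, which you do not establish (Lemma~\ref{non-exp} gives $L^p$-continuity only, and weak lower semicontinuity of the seminorm goes the wrong way). Finally, the assertion that ``radially decreasing functions are fixed by the symmetrization'' is not accurate: a radially decreasing function centered off $\{x_N=0\}$ is \emph{translated}, not fixed, by the continuous Steiner flow --- this is harmless since the seminorm is translation-invariant, but it is an indication that the iteration bookkeeping is more delicate than you allow.

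The paper sidesteps all of this by proving \eqref{pr} directly and for \emph{all} nonnegative $u$, without any ``not radially decreasing'' hypothesis: it replaces the singular kernel $|z|^{-(N+2s)}$ by the integrable regularization $\k_\varepsilon(z)=(|z|^2+\varepsilon^2)^{-(N+2s)/2}$, expands the square using equimeasurability of $u^t$, applies Brock's Riesz-type rearrangement inequality (\cite[Corollary~2]{Brock-95}) to the cross term, and lets $\varepsilon\to0$ via Fatou. This is the route you should take; Theorem~\ref{thm-del} is then used only once, at the end, to upgrade the non-strict inequality to the quantitative estimate~\eqref{main-prop-1}, exactly as you do. Your approach to the compact-support issue (approximate by $u_n\in C^{0,1}_c(\O)$ and show $u_n^t\in C^{0,1}_c(\O^t)$) is in the same spirit as the paper's, which invokes \cite[Lemma~1]{Brock-95} to bound $\dist(K^t,\partial\O^t)$ from below by $\dist(K,\partial\O)$; your variant via a shrunk set $\O_\varepsilon\Subset\O$ is workable but should be made precise along those lines.
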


\begin{proof}
We first show that 
\begin{equation}\label{pr}
[u^t]^2_{H^s(\R^N)}\leq [u]^2_{H^s(\R^N)}
\end{equation}
for all $t\in [0,\infty]$ and for all nonnegative function $u$. The proof follows a standard argument, we add it here for sake of completeness. Let $\varepsilon>0$ and consider the regularized kernel $\k_\varepsilon(z) = (|z|^2+\varepsilon^2)^{-(N+2s)/2}$. Since $\k_\varepsilon\in L^1(\R^N)$, and the $L^2$-norms of the symmetrized functions are preserved, we may write
\begin{align}\label{Eq2.8}
\frac{2}{c_{N,s}}J_\varepsilon[u^t] &:= \iint_{\R^N\times\R^N}\frac{(u^t(x)-u^t(y))^2}{\big(|x-y|^2+\varepsilon^2\big)^{\frac{N+2s}{2}}}dxdy\nonumber    \\
&=\iint_{\R^N\times\R^N}\frac{(u^t(x))^2+(u^t(y))^2-2u^t(x)u^t(y)}{\big(|x-y|^2+\varepsilon^2\big)^{\frac{N+2s}{2}}}dxdy\nonumber \\
&=\iint_{\R^N\times\R^N}\frac{u^2(x)}{\big(|x-y|^2+\varepsilon^2\big)^{\frac{N+2s}{2}}}dxdy+\iint_{\R^N\times\R^N}\frac{u^2(x)}{\big(|x-y|^2+\varepsilon^2\big)^{\frac{N+2s}{2}}}dxdy\nonumber\\
&\quad-2\iint_{\R^N\times\R^N}\frac{u^t(x)u^t(y)}{\big(|x-y|^2+\varepsilon^2\big)^{\frac{N+2s}{2}}}dxdy.
\end{align}
By \cite[Corollary $2$]{Brock-95}, we know that
\begin{equation}\label{Eq2.9}
\iint_{\R^N\times\R^N}\frac{u^t(x)u^t(y)}{\big(|x-y|^2+\varepsilon^2\big)^{\frac{N+2s}{2}}}dxdy\geq \iint_{\R^N\times\R^N}\frac{u(x)u(y)}{\big(|x-y|^2+\varepsilon^2\big)^{\frac{N+2s}{2}}}dxdy
\end{equation}
where we used that $\k_\varepsilon^t(z) = \k_\varepsilon(z)$ since it is radially decreasing. In view of \eqref{Eq2.8} and \eqref{Eq2.9} we get
\begin{align*}
J_\varepsilon[u^t]\leq      J_\varepsilon[u]\leq [u]^2_{H^s(\R^N)}.
\end{align*}
We conclude by using Fatou's lemma. In view of \eqref{pr}, to get \eqref{main-prop-0}, we simply need to check that $u^t\in \mathcal{H}^s_0(\O^t)$. For this, we let $u\in \mathcal{H}_0^s(\O)$, then there exists $u_n\in C^{0,1}_c(\O)$ such that $u_n\to u$ in $H^s(\R^N)$. By the second point of Lemma \ref{non-exp}, we have that  $u_n^t\in C^{0,1}(\R^N)$. We claim that $u_n^t$ is compactly supported in $\O^t$. Indeed, let $K=\supp(u)\subset\subset \O$, then by \cite[Lemma 1]{Brock-95}, we have for all $t\in [0,+\infty]$
\[
0<c_0\leq \dist(K,\partial\Om)\leq \dist(K^t,\partial\Om^t).
\]
By monotonicity --$(iii)$ of definition \eqref{defn:cont set}-- we easily check that 
$u^t \equiv 0$ in $(K^t)^c$. Since $K^t$ is compact, we conclude that $u_n^t\in C^{0,1}_c(\O^t)$.
It follows that
\begin{align*}
\|u_n^t\|^2_{\mathcal{H}^s_0(\O^t)}=[u_n^t]^2_{H^s(\R^N)}\leq [u_n]^2_{H^s(\R^N)}\leq C[u]^2_{H^s(\R^N)}
\end{align*}
where in the second inequality we have used \eqref{pr}. By compacity, there exists a subsequence $u_{n'}(t)$ and $w\in \mathcal{H}^s_0(\O^t)$ such that 
\[
u^t_{n'}\to w\quad\textrm{in $L^2(\O^t)$}\qquad \text{and}\qquad u^t_{n'}\to w\quad\textrm{weakly in $\mathcal{H}^s_0(\O^t)$}.
\]
To conclude, we need to check that $w=u^t$ and this follows from non-expanding property \eqref{non-exp}. Indeed, we have 
\begin{align*} 
\|w-u^t\|_{L^2(\R^N)}&\leq \|u_{n'}^t-w\|_{L^2(\R^N)}+\|u_{n'}^t-u^t\|_{L^2(\R^N)}\\
&\leq   \|u_{n'}^t-w\|_{L^2(\R^N)}+\|u_{n'}-u\|_{L^2(\R^N)}\to 0\qquad\textrm{as $n'\to\infty$}.
\end{align*}
Thus $u^t=w\in \mathcal{H}^s_0(\Om^t)$. This proves \eqref{main-prop-0}. The estimate \eqref{main-prop-1} follows immediately from Theorem \ref{thm-del} and the fact that $u^t\in \mathcal{H}^s_0(\O^t)$.

\end{proof}
\section{Characterization of solution domains}\label{sec-3}

Here, we relate solution domains $\O$  to  the overdetermined problem  \eqref{ov-det12} with critical sets of the functional 
\begin{equation}\label{eqenerJ}
\O\mapsto J_{s, p}(\O):=\lambda_{s, p}(\O)+C^2_0\Gamma^2(1+s)\, \textrm{Vol}(\O),
\end{equation}
where $\textrm{Vol}(\O)$ is the Lebesgue measure of $\O$ and $\Gamma$ is the classical gamma function.\\\\
To that aim, for every globally Lipschitz vector field $X\in C^{0,1}(\R^N,\R^N)$,  we let $\Phi_t:\R^N\to\R^N$ ($0\leq t<1$) be a family of homeomorphism such that $\Phi_0 = \id_{\R^N}$ and $\partial_t \Phi_t\Big|_{t=0} = X$. For a given shape functional $\O\mapsto J(\O)\in\R$, that is, a mapping that assigns to each measurable set a real number, we  let 
\[
dJ(\O)\cdot X= \lim_{t\to 0}\frac{J(\Phi_t(\O))-J(\O)}{t}
\]
whenever the limit exists. If the quantity above is finite, we say that $J(\O)$ is shape differentiable in the direction of $X$ and call $ dJ(\O)\cdot X$ the shape derivative in the direction of $X$.
\begin{lemma}\label{lem-3.1} Let $p\in [1,2]$. The functional $\Om\mapsto J_{s,p}(\O)$ defined in \eqref{eqenerJ} is shape differentiable in every direction $X\in C^{0,1}(\R^N,\R^N)$ and the shape derivative is given by:
\begin{equation}\label{shape-deriv-JO}
 dJ_{s,p}(\O)\cdot X=-\Gamma^2(1+s) \int_{\partial\O}\Bigl( (u/\delta^s)^2 - C_0^2\Bigl) X\cdot\nu\, d\sigma
\end{equation}
where $\nu$ denotes the outer unit normal to the boundary. 
\end{lemma}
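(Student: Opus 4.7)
The plan is to decompose the functional as $J_{s,p}(\Omega) = \lambda_{s,p}(\Omega) + C_0^2\Gamma^2(1+s)\,\textrm{Vol}(\Omega)$ and treat the two summands separately. The volume term is classical: the expansion $\det D\Phi_t = 1 + t\,\div X + o(t)$ combined with the divergence theorem yields $d\,\textrm{Vol}(\Omega)\cdot X = \int_{\partial\Omega} X\cdot\nu\, d\sigma$, producing the contribution $+C_0^2\Gamma^2(1+s)\int_{\partial\Omega} X\cdot\nu\, d\sigma$ to the claimed formula. The entire difficulty therefore amounts to proving that
\begin{equation*}
d\lambda_{s,p}(\Omega)\cdot X = -\Gamma^2(1+s)\int_{\partial\Omega} (u/\delta^s)^2\, X\cdot\nu\, d\sigma,
\end{equation*}
where $u$ denotes the unique positive minimizer of \eqref{eqlamsp}.

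For this, I would perform a direct variation of the Rayleigh quotient appearing in \eqref{eqlamsp}. Normalize $u$ by $\|u\|_{L^p(\Omega)}=1$, so that $(-\Delta)^s u = \lambda_{s,p}(\Omega)\, u^{p-1}$ in $\Omega$. For $t$ small, let $u_t\in\mathcal{H}^s_0(\Omega_t)$ be the corresponding minimizer on $\Omega_t = \Phi_t(\Omega)$ and set $v_t := u_t\circ\Phi_t \in \mathcal{H}^s_0(\Omega)$. The change of variables $x=\Phi_t(\xi)$, $y=\Phi_t(\eta)$ transforms
\begin{equation*}
\lambda_{s,p}(\Omega_t) = \frac{[u_t]^2_{H^s(\R^N)}}{\|u_t\|_{L^p(\Omega_t)}^2}
\end{equation*}
into a quotient of integrals over $\Omega$ and $\Omega\times\Omega$ involving $v_t$ together with the Jacobian $J_t=|\det D\Phi_t|$. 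Differentiating at $t=0$ and invoking the Euler--Lagrange equation for $u$ to cancel the contribution of the material derivative $v':=\partial_t v_t|_{t=0}$, one is left with a bulk expression depending only on $u$ and on the derivative, along $X$, of the translated Gagliardo kernel.

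The final step consists in converting this bulk expression into the claimed boundary integral through a fractional Pohozaev-type identity in the spirit of Ros-Oton and Serra: for $C^{1,1}$ domains and solutions of $(-\Delta)^s u = f$ vanishing in the complement of $\Omega$, integrals of the form $\int_\Omega ((-\Delta)^s u)(X\cdot\nabla u)\, dx$ can be reduced to boundary integrals carrying precisely the weight $\Gamma^2(1+s)(u/\delta^s)^2\, X\cdot\nu$, which explains both the coefficient and the appearance of the fractional normal trace. The principal anticipated obstacle is the justification of differentiability of $t\mapsto v_t$ in $\mathcal{H}^s_0(\Omega)$: for $p=2$ this is standard because $\lambda_{s,2}$ is a simple eigenvalue, whereas for $p\in[1,2)$ the nonlinearity $u^{p-1}$ is not Lipschitz at the origin, which is precisely where the restriction $p\in[1,2]$ enters in an essential way. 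This last point can be addressed by combining uniqueness of the minimizer, the Hopf-type lower bound $u\gtrsim \delta^s$ in $\Omega$, and an implicit function theorem applied to the Rayleigh quotient in a suitable neighborhood of $u$.
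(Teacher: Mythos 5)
Your decomposition of $J_{s,p}$ into $\lambda_{s,p}(\Omega)$ plus the volume term, the computation $d\,\mathrm{Vol}(\Omega)\cdot X = \int_{\partial\Omega} X\cdot\nu\,d\sigma$, and the final bookkeeping all match the paper's proof exactly. Where you and the paper diverge is in how the remaining identity $d\lambda_{s,p}(\Omega)\cdot X = -\Gamma^2(1+s)\int_{\partial\Omega}(u/\delta^s)^2\,X\cdot\nu\,d\sigma$ is established: the paper simply invokes \cite[Corollary 1.2]{DFW-21} as a known fractional Hadamard formula, whereas you sketch a derivation of it. Your sketch is, in substance, the strategy used in that reference: pull back to the fixed domain via $v_t=u_t\circ\Phi_t$, differentiate the transformed Rayleigh quotient so that optimality of the minimizer annihilates the material-derivative term (an envelope-theorem argument), and then identify the surviving bulk integral with the boundary term $\Gamma^2(1+s)(u/\delta^s)^2$ through a fractional Pohozaev-type identity on $C^{1,1}$ domains. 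You have also correctly located the genuinely delicate point, namely continuity/differentiability of $t\mapsto u_t$, and the reason the range $p\in[1,2]$ is essential (uniqueness of the minimizer and the Hopf-type lower bound $u\gtrsim\delta^s$). So your proposal is consistent with the paper and with the machinery of \cite{DFW-21}; the only economy you are missing is that, since the hard ingredient is already proved there, it suffices to cite it rather than re-derive it, which is precisely what the paper does.
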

\begin{proof}
It is easy to see that
\begin{equation*}\label{shap-vol}
 d\textrm{Vol}(\O)\cdot X= \int_{\partial\O} X\cdot \nu\,d\sigma.
\end{equation*}
In the other hand, we have (see e.g \cite[Corollary 1.2]{DFW-21}) 
\begin{equation*}\label{shape-deriv-lambda-sp}
d\lambda_{s, p}(\O)\cdot X = -\Gamma^2(1+s)\int_{\partial\O}(u/\delta^s)^2 X\cdot\nu\, d\sigma
\end{equation*}
Combining the above two inequalities gives \eqref{shape-deriv-JO}.  
\end{proof}

The following Proposition  characterizes solution domain of the overdetermined problem \eqref{ov-det12} and it is the main ingredient that our proof is based on.
\begin{proposition}\label{lem-4.2}
A bounded open set $\O$  of class $C^{1,1}$  is a solution of \eqref{ov-det12}
 if and only if 
\begin{equation}\label{main-observ2}
dJ_{s,p}(\O)\cdot X = 0\quad\textrm{for all $X\in C^{0,1}(\R^N,\R^N)$}. 
\end{equation}
\end{proposition}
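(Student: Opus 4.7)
The plan is to prove the two implications separately, using Lemma \ref{lem-3.1} for the forward direction and a standard fundamental-lemma-of-calculus-of-variations argument for the reverse direction.

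For the forward direction, assume $\Omega$ is a solution of \eqref{ov-det12}, so there exists a nontrivial $u$ satisfying $\Ds u = \lambda_{s,p} u^{p-1}$ in $\Omega$, $u \equiv 0$ on $\R^N \setminus \Omega$, and $u/\delta^s \equiv C_0$ on $\partial \Omega$. Since $\Omega$ is $C^{1,1}$, the overdetermined condition holds in the pointwise sense and $u/\delta^s \in C^0(\overline{\Omega})$. Plugging this into formula \eqref{shape-deriv-JO} from Lemma \ref{lem-3.1}, the integrand $(u/\delta^s)^2 - C_0^2$ vanishes identically on $\partial\Omega$, so $dJ_{s,p}(\Omega) \cdot X = 0$ for every $X \in C^{0,1}(\R^N,\R^N)$. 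This direction requires no further work.

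For the converse, suppose that $dJ_{s,p}(\Omega) \cdot X = 0$ for every Lipschitz vector field $X$. By Lemma \ref{lem-3.1} this means
\begin{equation*}
\int_{\partial \Omega} \Big( (u/\delta^s)^2 - C_0^2 \Big)\, X \cdot \nu \, d\sigma = 0 \qquad \text{for all } X \in C^{0,1}(\R^N, \R^N).
\end{equation*}
The key step is now to realize an arbitrary test function $\varphi$ on $\partial \Omega$ as the normal trace $X \cdot \nu$ of some admissible vector field. Because $\partial \Omega$ is of class $C^{1,1}$, the outer unit normal $\nu$ extends to a Lipschitz vector field $\widetilde \nu$ defined in a tubular neighborhood of $\partial \Omega$. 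Given any $\varphi \in C^{0,1}(\partial \Omega)$, I extend it to a Lipschitz function $\widetilde \varphi$ on $\R^N$ and choose a cutoff $\chi \in C^\infty_c(\R^N)$ equal to $1$ near $\partial \Omega$. The vector field $X := \chi \, \widetilde \varphi \, \widetilde \nu$ then lies in $C^{0,1}(\R^N,\R^N)$ and satisfies $X \cdot \nu = \varphi$ on $\partial \Omega$.

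With such test fields available, the integral identity forces
\begin{equation*}
\int_{\partial \Omega} \Big( (u/\delta^s)^2 - C_0^2 \Big)\, \varphi \, d\sigma = 0 \qquad \text{for every } \varphi \in C^{0,1}(\partial \Omega).
\end{equation*}
Since $u/\delta^s \in C^0(\partial \Omega)$ (using the $C^{1,1}$ regularity of $\Omega$, see the discussion preceding Theorem \ref{Theorem11}), the map $(u/\delta^s)^2 - C_0^2$ is continuous on $\partial \Omega$, and by density of Lipschitz functions in $C^0(\partial \Omega)$ the classical fundamental lemma yields $(u/\delta^s)^2 \equiv C_0^2$ on $\partial \Omega$. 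Finally, since $u$ is the (unique) positive minimizer realizing $\lambda_{s,p}(\Omega)$, one has $u > 0$ in $\Omega$ and hence $u/\delta^s \geq 0$ pointwise on $\partial \Omega$ by the defining limit \eqref{fractnormaderi}; after possibly renaming $C_0 := |C_0|$, this gives $u/\delta^s \equiv C_0$ on $\partial \Omega$, so $\Omega$ is a solution of \eqref{ov-det12}.

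The only nonroutine point is the tubular-neighborhood construction of admissible Lipschitz vector fields prescribing arbitrary normal traces; once that is in place the argument is a direct application of Lemma \ref{lem-3.1} together with a density/fundamental-lemma step, and the positivity of $u$ removes the sign ambiguity at the end.
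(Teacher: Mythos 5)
Your proposal is correct and follows essentially the same route as the paper: both directions use Lemma~\ref{lem-3.1}, the converse is obtained by constructing test fields of the form $X=\widetilde\varphi\,\widetilde\nu$ (the paper uses a $C^1$ extension of $h\in C^1(\partial\Omega)$ times a Lipschitz extension of $\nu$; your cutoff $\chi$ is a harmless extra precaution), followed by the fundamental lemma and the nonnegativity of $u$ to remove the sign ambiguity in $(u/\delta^s)^2=C_0^2$.
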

\begin{proof}
If $\O$ is a solution of \eqref{ov-det12}, it directly follows from Lemma \ref{lem-4.2} that  $dJ_{s,p}(\O)\cdot X =0$.  Conversely  if \eqref{main-observ2}  holds then 
\begin{equation}\label{shap-44}
 \int_{\partial\O}\big((u/\delta^s)^2-C_0^2\big)X\cdot\nu d\sigma = 0
\end{equation}
for all $X\in C^{0,1}(\R^N,\R^N)$.
Fixed $h\in C^1(\partial\Om)$ and let $\widetilde{h}\in C^1(\R^N,\R)$ and $V\in C^{0,1}(\R^N,\R^N)$ be respectively a $C^1$-extension of $h$ and a $C^{0,1}$-extension of the unit normal. Plugging  $X = \widetilde{h}V\in C^{0,1}(\R^N,\R^N)$ into \eqref{shap-44} gives $\int_{\partial\O}\big((u/\delta^s)^2-C_0^2\big)h d\sigma = 0$ for all $h\in C^1(\partial\Om)$. By the fundamental theorem of calculus and since $u$ is nonnegative, we deduce that $u/\delta^s = C_0$ on $\partial\Om$.
\end{proof}
\section{ Proof of Theorem \ref{Theorem11}}\label{sec-4}
In this section, we prove our main result. We re-state the result for the reader convience.
\begin{theorem}Let $p\in [1,2]$. Then, every convex solution $\Om$ of \eqref{ov-det12} must be a ball.
\end{theorem}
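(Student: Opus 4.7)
The plan is to argue by contradiction: assuming $\Omega$ is a convex $C^{1,1}$ solution of \eqref{ov-det12} which is not a ball, I would construct a Lipschitz flow realizing the continuous Steiner symmetrization of $\Omega$, and derive a contradiction between the strict energy decrease provided by Proposition \ref{prop-1} and the first-order stationarity of $J_{s,p}$ at $\Omega$ given by Proposition \ref{lem-4.2}.

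Let $u$ be the unique positive solution of the first two equations in \eqref{ov-det12}, normalized so that $\|u\|_{L^p(\Omega)}=1$; then $\lambda_{s,p}(\Omega)=\|u\|_{\mathcal{H}^s_0(\Omega)}^2$. If $u$ were radially decreasing about some point $x_0\in\mathbb{R}^N$, the set $\Omega=\{u>0\}$ would be a ball centered at $x_0$, contradicting the working hypothesis. Therefore Proposition \ref{prop-1} applies: there exist $\gamma_0,t_0>0$ and, after a rigid motion, the hyperplane $H=\{x_N=0\}$ such that for every $t\in(0,t_0]$,
\[
\|u^t\|_{\mathcal{H}^s_0(\Omega^t)}^2\leq \lambda_{s,p}(\Omega)-\gamma_0 t.
\]
Continuous Steiner symmetrization preserves $L^p$ norms and Lebesgue measure, so $u^t$ is admissible in \eqref{eqlamsp} with $\|u^t\|_{L^p(\Omega^t)}=1$ and $|\Omega^t|=|\Omega|$. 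Combining these facts with the definition of $J_{s,p}$ yields
\begin{equation}\label{plan-upper}
J_{s,p}(\Omega^t)\leq J_{s,p}(\Omega)-\gamma_0 t\qquad\forall\,t\in(0,t_0].
\end{equation}

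Next, I would realize $\Omega^t$ as the image of $\Omega$ under the flow of a Lipschitz vector field. Convexity of $\Omega$ guarantees that every vertical section is a single interval $\Omega(x')=[a(x'),b(x')]$ with $a$ convex and $b$ concave on $\Omega'$. Setting $f(x')=(a(x')+b(x'))/2$ and using the explicit formulas \eqref{a-t}--\eqref{b-t}, a direct computation shows that the map
\[
\Phi_t(x',y):=\bigl(x',\,y-f(x')(1-e^{-t})\bigr)
\]
satisfies $\Phi_t(\Omega)=\Omega^t$ for every $t\geq 0$. After extending $f$ to a globally Lipschitz function on $\mathbb{R}^{N-1}$ (e.g.\ via Kirszbraun's theorem), $\Phi_t$ is a family of bi-Lipschitz homeomorphisms of $\mathbb{R}^N$ with $\Phi_0=\id$ and initial velocity
\[
X(x',y):=\partial_t\Phi_t|_{t=0}(x',y)=(0,\dots,0,-f(x'))\in C^{0,1}(\mathbb{R}^N,\mathbb{R}^N).
\]
Since $\Omega$ is a solution of \eqref{ov-det12}, Proposition \ref{lem-4.2} gives $dJ_{s,p}(\Omega)\cdot X=0$, hence by the definition of the shape derivative
\[
J_{s,p}(\Omega^t)-J_{s,p}(\Omega)=J_{s,p}(\Phi_t(\Omega))-J_{s,p}(\Omega)=t\,dJ_{s,p}(\Omega)\cdot X+o(t)=o(t).
\]
Dividing \eqref{plan-upper} by $t>0$ and letting $t\to 0^+$ yields $0\leq -\gamma_0<0$, the desired contradiction, so $\Omega$ must be a ball.

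The main technical obstacle in this plan is verifying that the chord-midpoint function $f=(a+b)/2$ is Lipschitz on $\bar\Omega'$, so that a global Lipschitz extension to $\mathbb{R}^{N-1}$ is available. Near points where $\partial\Omega$ has horizontal outer unit normal, i.e.\ where $a(x')=b(x')$ with $x'\in\partial\Omega'$, the individual graphs $a$ and $b$ have unbounded gradient, but the half-sum retains Lipschitz regularity thanks to a cancellation of the singular parts enforced by convexity together with the $C^{1,1}$ assumption; this can be made rigorous by parametrizing $\partial\Omega$ locally as a $C^{1,1}$ graph over its tangent hyperplane. A secondary, more routine point is the asymptotic expansion $J_{s,p}(\Phi_t(\Omega))=J_{s,p}(\Omega)+t\,dJ_{s,p}(\Omega)\cdot X+o(t)$ for the specific family $\Phi_t$, which follows from the differentiability results underlying Lemma \ref{lem-3.1} and the standard Hadamard formula for the volume.
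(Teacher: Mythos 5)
Your proposal mirrors the paper's own proof exactly in structure: argue by contradiction, realize the continuous Steiner symmetrization of $\Omega$ as a flow $\Phi_t=\mathrm{id}+(1-e^{-t})V$ with $V$ Lipschitz, and pit the strict decrease $\lambda_{s,p}(\Omega^t)\leq\lambda_{s,p}(\Omega)-\gamma_0 t$ from Proposition~\ref{prop-1} against the stationarity $dJ_{s,p}(\Omega)\cdot V=0$ from Proposition~\ref{lem-4.2}, using volume preservation of $\Omega\mapsto\Omega^t$. You also fill in a point the paper glosses over, namely that if $u$ were radially decreasing about some center then $\Omega=\{u>0\}$ would already be a ball, so Theorem~\ref{thm-del} is indeed applicable after a rotation.

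You are right to single out the Lipschitz continuity of the chord-midpoint function $f=(y_1+y_2)/2$ as the sensitive step, and in fact the paper's own treatment of it is mistaken: the paper asserts that $y_1$ and $y_2$ are \emph{individually} Lipschitz on $\overline{\Omega'}$, which is false near boundary points where the outer normal is orthogonal to $e_N$ (already for a ball, $y_2(x')=\sqrt{1-|x'|^2}$ has unbounded gradient as $|x'|\to1$). Unfortunately, the cancellation you invoke to conclude that $f$ is Lipschitz under convexity plus $C^{1,1}$ regularity is itself not correct in general. Take $N=2$ and a bounded convex $C^{1,1}$ domain whose boundary near its rightmost point $(\beta,0)$ is $\{x_1=\beta-g(x_2)\}$ with $g(t)=t^{2}$ for $t\geq0$ and $g(t)=t^{2}/4$ for $t<0$; this $g$ is convex with Lipschitz derivative, so $\partial\Omega$ is $C^{1,1}$. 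For $h=\beta-x_1>0$ small one finds $y_2(x_1)=\sqrt{h}$, $y_1(x_1)=-2\sqrt{h}$, hence $f(x_1)=-\tfrac{1}{2}\sqrt{\beta-x_1}$, which is not Lipschitz. The cancellation you describe does occur when $\partial\Omega$ is $C^{2}$, since the two branches of $g^{-1}$ then agree to first order, but it can fail under mere $C^{1,1}$ regularity. So both your proposal and the paper's proof have a genuine gap at this exact step; closing it would require either strengthening the regularity hypothesis on $\Omega$ to $C^{2}$, extracting additional boundary regularity from the overdetermined condition itself, or proving that the shape-derivative formula of Lemma~\ref{lem-3.1} extends to velocity fields with only $\mathrm{dist}^{1/2}$-type regularity along $\partial\Omega'$.
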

\begin{proof}
The idea of the proof is to exhibit a globally Lipschitz vector field that violate  \eqref{main-observ2}.  Such a velocity field will be constructed by using the continuous Steiner symmetrization as in \cite{BH-02}. See also \cite{CH-99}.\\\\
Assume by contradiction that $\O$ is not a ball, then there a direction $e\in \mathbb{S}^1$ for which $\O$ is not symmetric. Without lost  of generality, we may assume that $e = e_N$. Since $\O$ is assumed to be convex, then for all $x'\in\O'$, there exists exactly two points $y_1(x'), y_2(x')\in \partial\O$ such that 
\[
\O(x') = (y_1(x'),y_2(x')).
\]
The set $\O$ can be thus characterized by 
\begin{equation}\label{charac-Om}
    \O = \Big\{ (x',y): x'\in \O'\quad y_1(x')\leq y\leq y_2(x')\Big\}.
\end{equation}
The continuous Steiner symmetrization of $\O$ is then given by 
\begin{equation}\label{cont-St-sym}
\O^t = \Big\{ (x',y): x'\in \O'\quad y\in \big(y_1^t(x'),y_2^t(x')\big)\Big\},   
\end{equation}
where for $0\leq t\leq \infty$ we set
\begin{align}
y_1^t(x') &= \frac{1}{2}\Big(y_1(x') -y_2(x') +e^{-t}\big(y_1(x') +y_2(x') \big)\Big),\label{y-1t}\\
y_2^t(x') &= \frac{1}{2}\Big(y_1(x') -y_2(x') +e^{-t}\big(y_1(x') +y_2(x') \big)\Big)\label{y-2t}.
\end{align} 
Recalling the characterization of $\Omega$ in \eqref{charac-Om}, we define the vector field 
\[
V: \R^N\to \R^N, \quad x\mapsto V(x',y)=
\left\{\begin{aligned}
\big( 0, -(1/2)\big(y_1(x')+y_2(x')\big)\big)&\quad\text{if}\quad\textrm{$(x',y)\in \overline{\O},$}\\
0&\quad \text{else}.
\end{aligned}
\right. 
\]
Since $\O$ is convex, then $V\in C^{0,1}(\R^N,\R^N) $. Indeed, for each $j\in \{1,2\}$, the function 
\[
y_j: \R^{N-1}\to\R, \quad x'\mapsto y_j(x')=
\left\{\begin{aligned}
y_j(x')&\quad\text{if}\quad x'\in \overline{\O'}\\
& 0\quad \text{if}\quad\R^{N-1}\setminus \overline{\O'}
\end{aligned}
\right. 
\]
are Lipschitz continuous. To see this, we let $\{x_l\}_{l\in I}$ be a finite collection of points in $\O'$, and let $
\{B(x_l 
,\mu_0)\}_{l\in I}$ be a finite family of open balls, each of radius $\mu_0>0$ such that 
$$\overline{\O'}\subset \bigcup_{l\in I} B(x_j,\mu_0).
$$
Since $y_j\in L^\infty(\R^{N-1})$, we may assume without loose of generality that $x'_1$ and $x'_2$ belong both to one of the balls $B(x_l, \mu_0)$ with $l\in I$,  and thus $|x'_1-x'_2|\leq 2\mu_0$. Because $\O$ is convex, its boundary is locally the graph of a Lipschitz continuous function. That is, for a fixed $j\in \{1,2\}$ and for every $x_l$, there exists a cylinder $K^j_l = B'\times (-a,+a)$ where $B'\in \R^{N-1}$ is an open ball centered at $y_j(x_l)\in \partial\O$, and $a>0$ a positive real number and a Lipschitz function $\psi^j_l: B'\to (-a,+a)$ such that $\psi^j_l(x_l) = 0$, $ \partial\O \cap K^j_l= \Big\{ (x',\psi^j_l(x')); x'\in B'\Big\}$ and $\O\cap K^j_l = \Big\{(x',x_N)\in K;\; x_N>\psi^j_l(x')\Big\}$.
Up to reducing $\mu_0>0$ if necessary, we may assume that $y_j(x'_1),y_j(x'_2)\in \partial\Om\cap K^j_l$. Hence, 
\[y_j(x'_1) = \psi^j_l(x'_1) \quad\text{and} \quad y_j(x'_2) = \psi^j_l(x'_2).\]
Since $\psi^j_l$ is Lipschitz, we have 
\[
|y_j(x'_1)-y_j(x'_2)| = |\psi^j_l(x'_1)-\psi^j_l(x'_2)|\leq C^j_l|x'_1-x'_2|.
\] 
Set \[C_j = \max_{l\in I} C^j_l.\]
Then for every couple $x'_1,x'_2\in \bigcup_{l\in I} B(x_l,\mu_0)\supset \overline{\O'}$ with $|x'_1-x'_2|\leq 2\mu_0$, we get $|y_j(x'_1)-y_j(x'_2)|\leq C^j|x'_1-x'_2|$. That is, $y_j$ is globally Lipschitz continuous which in turn yields the Lipschitz continuity of $V$.\\\\
Next, for $t\in [0,\infty)$, we define 
\[
\Phi_t:\R^N\to\R^N, \quad x\mapsto\Phi_t(x) =  x+(1-e^{-t})V(x).
\]
For $0\leq t<1$ small enough, $\Phi_t$ is a homeomorphism and we have $\O^t = \Phi_t(\O)$. Next, we let $u_t\in \mathcal{H}^s_0\big(\Phi_t(\O)\big)$ be the unique weak solution of  
\begin{equation*}
\left\{\begin{aligned}
\Ds u_t&= \lambda_{s, p}(\O) u_t^{p-1}\quad\text{in}\quad\Phi_t(\Om) \\
u_t &= 0\quad \text{in}\quad\R^N\setminus \Phi_t(\Om).
\end{aligned}
\right. 
\end{equation*}
In what follows we shorten the notation and write  $\lambda(\O)$ in place of $\lambda_{s, p}(\O)$. 
Then, we have
\begin{align}\label{charac-J}
\lambda(\Phi_t(\O)) &=  \inf_{\mathcal{H}^s_0\big(\Phi_t(\O)\big)}\Big\{[u]^2_{H^s(\R^N)},\, \|u\|^2_{L^p(\R^N)}=1\Big\}\nonumber\\
&= \inf_{\mathcal{H}^s_0(\O^t)}\Big\{[u]^2_{H^s(\R^N)},\, \|u\|^2_{L^p(\R^N)}=1\Big\}.
\end{align}
By Proposition \ref{prop-1}, we know that $u^t$ --the continuous Steiner symmetrization of $u$-- which is a priori different from $u_t$, is admissible in \eqref{charac-J}. Consequently, and since the Steiner symmetrization preserves the $L^p$-norm, we deduce from \eqref{main-prop-1} that there exists $\gamma_0  = \gamma_0(N,s,u)>0$ and $t_0 = t_0(N,s, u)$ such that for $0\leq t<t_0$, there holds:
\[
\lambda (\Phi_t(\O)) -\lambda(\O) \leq \|u^t\|^2_{\mathcal{H}^s_0(\O^t)}-\|u\|^2_{\mathcal{H}^s_0(\O)}\leq -\gamma_0 t.
\]
In other words
\[
\frac{\lambda (\Phi_t(\O)) -\lambda(\O)}{t}\leq -\gamma_0<0
\]
Taking the limit (which exists in view of Lemma \ref{lem-3.1} ) we get 
\begin{equation}\label{dJ.V}
d\lambda (\O)\cdot V = \lim_{t\to 0}\frac{\lambda (\Phi_t(\O)) -\lambda (\O)}{t}\leq -\gamma_0<0
\end{equation}
In the other hand, by $(ii)$ of definition \eqref{defn:cont set}, we know that the continuous Steiner symmetrization is volume preserving, thus
\begin{equation}\label{dVol.V}
d\textrm{Vol}(\O)\cdot V  = 0.
\end{equation}
In view of \eqref{dJ.V} and \eqref{dVol.V}, we get $dJ_{s,p}(\O)\cdot V <0$ which contradicts \eqref{main-observ2}. The proof is finish.
\end{proof}


\end{document}